\title{Improved Bounds for the Graham-Pollak Problem for Hypergraphs}
\author{Imre Leader\thanks{Department of Pure Mathematics and Mathematical Statistics, Centre for Mathematical Sciences, University of Cambridge, Wilberforce Road, Cambridge CB3 0WB, United Kingdom. Email: \texttt{I.Leader@dpmms.cam.ac.uk}.} \and Ta Sheng Tan\thanks{Institute of Mathematical Sciences, Faculty of Science, University of Malaya, 50603 Kuala Lumpur, Malaysia. Email: \texttt{tstan@um.edu.my}.}}
\newtheorem{thm}{Theorem}
\newtheorem{corollary}[thm]{Corollary}
\theoremstyle{remark}  
\theoremstyle{definition}
\begin{document}

\maketitle

\begin{abstract}
 For a fixed $r$, let $f_r(n)$ denote the minimum number of complete $r$-partite $r$-graphs needed to partition the complete $r$-graph on $n$ vertices. 
 The Graham-Pollak theorem asserts that $f_2(n)=n-1$. 
 An easy construction shows that $f_r(n) \leq (1+o(1))\binom{n}{\lfloor r/2 \rfloor}$, and we write $c_r$ for the least number such that $f_r(n) \leq c_r (1+o(1))\binom{n}{\lfloor r/2 \rfloor}$.

 It was known that $c_r < 1$ for each even $r \geq 4$, but this was not known for any odd value of $r$. 
 In this short note, we prove that $c_{295}<1$. Our method also shows that $c_r \rightarrow 0$, answering another open problem.
\end{abstract}

\emph{Keywords: }Hypergraph, Decomposition, Graham-Pollak

\section{Introduction}

The edge set of $K_n$, the complete graph on $n$ vertices, can be partitioned into $n-1$ complete bipartite subgraphs: this may be done in many ways, for example by taking $n-1$ stars centred at different vertices. Graham and Pollak~\cite{graham1,graham2} proved that the number $n-1$ cannot be decreased. Several other proofs of this result have been found, by Tverberg~\cite{tverberg}, Peck~\cite{peck}, and Vishwanathan~\cite{vishwanathan1,vishwanathan2}, among others.

Generalising this to hypergraphs, for $n\ge r\ge 1$, let $f_r(n)$ be the minimum number of complete $r$-partite $r$-graphs needed to partition the edge set of $K_n^{(r)}$, the complete $r$-uniform hypergraph on $n$ vertices
(i.e., the collection of all $r$-sets from an $n$-set).
Thus the Graham-Pollak theorem asserts that $f_2(n) = n-1$. For $r\ge 3$, an easy upper bound of $\binom{n-\lceil r/2\rceil}{\lfloor r/2\rfloor}$ may be obtained by generalising the star example above. 
Indeed, for $r$ even, having ordered the vertices, consider the collection of $r$-sets whose $2nd, 4th, \ldots, rth$ vertices are fixed. This forms a complete $r$-partite $r$-graph, and the collection of all $\binom{n- r/2}{ r/2 }$ such is a partition of $K_n^{(r)}$. 
For $r$ odd, we instead fix the $2nd, 4th, \ldots, (r-1)th$ vertices, yielding a partition into $\binom{n- (r+1)/2}{ (r-1)/2 }$ parts.

Alon~\cite{alon1} showed that $f_3(n) = n-2$. More generally, for each fixed $r\ge 1$, he showed that  
\begin{equation*}
 \frac{2}{\binom{2\lfloor r/2 \rfloor}{\lfloor r/2 \rfloor}}(1+o(1))\binom{n}{\lfloor r/2\rfloor}\le f_r(n)\le (1-o(1))\binom{n}{\lfloor r/2\rfloor},
\end{equation*}
where the upper bound follows from the construction above. 
Writing $c_r$ for the least $c$ such that $f_r(n) \leq c (1+o(1))\binom{n}{\lfloor r/2 \rfloor}$, the above results assert that $c_2 = 1$, $c_3 = 1$, and $\frac{2}{\binom{2\lfloor r/2 \rfloor}{\lfloor r/2 \rfloor}} \le c_r \le 1$ for all $r$. 
How do the $c_r$ behave?

Cioab\v{a}, K\"{u}ndgen and Verstra\"{e}te~\cite{cioaba1} gave an improvement (in a lower-order term) to Alon's lower bound, and Cioab\v{a} and Tait~\cite{cioaba2} showed that the construction above is not sharp in general, but Alon's asymptotic bounds (i.e., the above bounds on $c_r$) remained unchanged. 
Recently, Leader, Mili\'{c}evi\'{c} and Tan~\cite{leader} showed that $c_r \leq \frac{14}{15}$ for each even $r \geq 4$.
However, they could not improve the bound of $c_r\le1$ for any odd $r$ -- the point being that the construction above is better for $r$ odd than for $r$ even (the exponent of $n$ is $(r-1)/2$ for $r$ odd versus $r/2$ for $r$ even), and so is harder to improve.

In this note, we give a simple argument to show that $c_{295}<1$. Our method also shows that $c_r \rightarrow 0$, answering another question from \cite{leader}. 

It would be interesting to know what happens for smaller odd values of $r$: for example, is $c_5<1$?
Determining the precise value of $c_4$ (i.e., the asymptotic behaviour of $f_4(n)$) would also be of great interest, as would determining the decay rate of the $c_r$. See ~\cite{leader} for several related questions and conjectures. 

\section{Main Result}
The motivation for our proof is as follows.
The key to the approach used in $\cite{leader}$ in proving $c_r<1$ for each even $r\ge 4$ was to investigate the minimum number of products of complete bipartite graphs, that is, sets of the form $E(K_{a,b})\times E(K_{c,d})$, needed to partition the set $E(K_n)\times E(K_n)$.
Writing $g(n)$ for this minimum value, it is trivial that $g(n)\le (n-1)^2$, by taking the products of the complete bipartite graphs appearing in a decomposition of $K_n$ into $n-1$ complete bipartite graphs.
It was shown in \cite{leader} that $g(n)\le \left(\frac{14}{15}+o(1)\right)n^2$.
It turned out that this upper bound on $g(n)$ was enough (via an iterative construction) to bound $c_r$ below 1 for each even $r\ge 4$. 

Now, as remarked above, for $r$ odd the construction in the Introduction is much better than for $r$ even. In fact, while there are many iterative ways to redo the construction when $r$ is even, passing from $n/2$ to $n$, these fail when $r$ is odd: it turns out that an extra factor is introduced at each stage.
However, rather unexpectedly, we will see that (at least if $r$ is large) if we partition into \emph{many} pieces, instead of just two pieces, then the gain we obtain from the $14/15$ improvement in $g(n)$ outweighs the loss arising from this extra factor -- even though this extra factor grows as the number of pieces grows.

A \emph{minimal decomposition} of a complete $r$-partite $r$-graph $K_n^{(r)}$ is a partition of the edge set into $f_r(n)$ complete $r$-partite $r$-graphs. 
A \emph{block} is a product of the edge sets of two complete bipartite graphs.
Similarly, a \emph{minimal decomposition} of $E(K_n)\times E(K_n)$ is a partition of $E(K_n)\times E(K_n)$ into $g(n)$ blocks.
Finally, for a set $V$, we may write $E(V)$ to denote the edge set of the complete graph on $V$, that is, the set of all 2-subsets of $V$.

\begin{thm}\label{theorem_main}
 Let $r=2d+1$ be fixed. Then for each $k$ there exists $\epsilon_k$, with $\epsilon_k \rightarrow 0$ as $k\rightarrow \infty$, such that for all $n$ we have 
 \begin{equation*}                                                                                                                               
f_r(kn) \le \left(\left(\frac{14}{15}\right)^{\left\lfloor\frac{d}{2}\right\rfloor}+d\left(\frac{14}{15}\right)^{\left\lfloor\frac{d-1}{2}\right\rfloor}+\epsilon_k\right)(1+o(1))\binom{kn}{d}.
 \end{equation*}
 (Here the $o(1)$ term is as $n\rightarrow \infty$, with $k$ and $d$ fixed.)
\end{thm}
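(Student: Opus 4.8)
The plan is to build the decomposition from essentially a single external ingredient: the block bound $g(n)\le(\tfrac{14}{15}+o(1))n^{2}$ from \cite{leader}, together with two elementary observations: a block $E(K_{a,b})\times E(K_{c,e})$ sitting on four pairwise disjoint vertex classes is a complete $4$-partite $4$-graph, and the product of a complete $s$-partite $s$-graph with a complete $t$-partite $t$-graph on disjoint vertex sets is a complete $(s+t)$-partite $(s+t)$-graph. Iterating the block bound by taking products of decompositions, the $j$-fold product $E(K_n)\times\cdots\times E(K_n)$ can be partitioned into at most $(\tfrac{14}{15}+o(1))^{\lfloor j/2\rfloor}n^{j}$ products of $j$ complete bipartite edge sets, and adjoining one further vertex class turns such a product into a complete $(2j+1)$-partite $(2j+1)$-graph.

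Next I would split the $kn$ vertices into $k$ consecutive groups $U_{1}<\cdots<U_{k}$ of size $n$ and build a decomposition of $K_{kn}^{(2d+1)}$ modelled on the Graham--Pollak upper bound, but carried out at the level of the groups $U_i$. Writing $2d+1=4\lfloor d/2\rfloor+\rho$ with $\rho\in\{1,3\}$, the idea is to group the $2d+1$ vertices of a set, taken in increasing order, into $\lfloor d/2\rfloor$ ``quadruples'' and one ``short slot'' of size $\rho$, and to arrange that for a generic set each quadruple is split as a pair-of-pairs lying inside $E(U_i)\times E(U_j)$ for two of the groups, which is then decomposed using $g$. Just as in Graham--Pollak, a generic set is indexed by a spread-out choice of $d$ of the groups (with one vertex ``fixed'' in most of them), so that the relevant Vandermonde identity turns the total piece count into $(1+o(1))\binom{kn}{d}$; the gain over the trivial construction is that on each of the $\lfloor d/2\rfloor$ quadruple slots the naive count $n^{2}$ is replaced by $g(n)=(\tfrac{14}{15}+o(1))n^{2}$. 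Hence this generic family of complete $(2d+1)$-partite pieces has size at most $\bigl((\tfrac{14}{15})^{\lfloor d/2\rfloor}+o(1)\bigr)\binom{kn}{d}$.

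I would then account for the non-generic pieces. The short slot of size $\rho$ cannot be paired with anything, and it is precisely the source of the extra factor mentioned in the introduction: when its position forces it to be absorbed into an adjacent quadruple, that slot becomes a stretch of odd length $\rho+4$, on which we can realise only $\lfloor(d-1)/2\rfloor$ of the $\tfrac{14}{15}$-savings. There are only a bounded number of genuinely different such positions --- roughly $d$ of them, one for each place the short slot can sit relative to the quadruples --- so these families together contribute at most $\bigl(d(\tfrac{14}{15})^{\lfloor(d-1)/2\rfloor}+o(1)\bigr)\binom{kn}{d}$ pieces. Every remaining piece --- coming from a set that sits degenerately with respect to the partition into the $U_i$, or with a quadruple straddling a group boundary, so that the relevant $E(U_i)\times E(U_j)$ is unavailable and the plain Graham--Pollak construction must be used on that stretch --- concerns only an $O(1/k)$ proportion of the relevant sets per slot, and so contributes at most $\epsilon_k\binom{kn}{d}$ with $\epsilon_k\to0$, together with genuinely lower-order terms that go into the $o(1)$.

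The delicate point, which I expect to be the main obstacle, is exactly the management of the short slot: a single vertex (or triple) that has to be ``parked'' somewhere among the $k$ groups. Parking it by iterating a two-group split makes a fresh trivial-sized term reappear at every stage, so that the extra factor becomes \emph{multiplicative} --- which is why the odd case was previously stuck at $c_r\le1$. Using one $k$-way split instead spreads the cost of parking the short slot (and of the group boundaries crossed by it and by the quadruples) over all $k$ groups, and the surprising point is that this turns the penalty into an \emph{additive} one, $d(\tfrac{14}{15})^{\lfloor(d-1)/2\rfloor}\binom{kn}{d}$, plus an error $\epsilon_k\to0$. Carrying this out requires setting up the index families so that the quadruple substructures really are copies of some $E(U_i)\times E(U_j)$, and then checking the several Vandermonde-type summations that collapse the bookkeeping to $(1+o(1))\binom{kn}{d}$; this last part is routine.
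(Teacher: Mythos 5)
Your high-level ingredients are the right ones and match the paper's: split the $kn$ vertices into $k$ classes of size $n$, apply $g(n)\le\left(\frac{14}{15}+o(1)\right)n^2$ to pairs of classes (multiplied over $\lfloor d/2\rfloor$ pairs), accept one unpaired odd slot, and push everything else into an error $\epsilon_k$. But the way you propose to organise the decomposition --- by how the \emph{consecutive quadruples} of an edge, taken in the global vertex order, sit relative to the interval groups $U_1<\cdots<U_k$ --- contains a genuine gap, and it lies exactly where you locate the ``delicate point''. Your control of the non-generic configurations argues that they ``concern only an $O(1/k)$ proportion of the relevant sets'' and hence contribute $\epsilon_k\binom{kn}{d}$. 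That is the wrong measure: what must be bounded is the number of complete $r$-partite $r$-graphs needed, not the number of edges involved. Edges spread over many groups are the vast majority of edges but are almost free (for instance, for each choice of $2d+1$ groups, all edges meeting each of them once form a \emph{single} complete $r$-partite $r$-graph), whereas edges concentrated with a triple in one group and pairs in $d-1$ others are a vanishing proportion of edges yet require on the order of $d\binom{k}{d}\left(\frac{14}{15}\right)^{\lfloor (d-1)/2\rfloor}n^{d}$ parts --- the same order as your main term. In your scheme such edges (for $d$ even, say, where your short slot has size $\rho=1$) appear as a ``quadruple straddling'' configuration and get swept into $\epsilon_k$; in fact they are precisely the source of the second main term. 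In the paper that term arises from the intersection pattern $2+2+\cdots+2+3$, with the factor $d$ counting which of the $d$ chosen classes holds the triple, not the number of positions of a short slot in the vertex order; your ``roughly $d$ positions'' count and the per-position cost are not justified and do not obviously reproduce it.

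The fix is to drop the vertex ordering and the consecutive-quadruple bookkeeping altogether and classify edges by the unordered multiset of intersection sizes with the $k$ classes, which is what the paper does. Patterns with at least three odd intersections cost $O(n^{d-1})$ parts in total; patterns with exactly one odd intersection that involve at most $d-1$ classes of size $\ge 2$ cost $O(k^{d-1}n^{d})$, which is the genuine source of $\epsilon_k$ after comparing with $\binom{k}{d}n^{d}$; only the patterns $1+2+\cdots+2$ and $2+\cdots+2+3$ remain. For these one pairs up the $d$ chosen classes arbitrarily and takes products of blocks from minimal decompositions of $E(K_n)\times E(K_n)$, absorbing the single vertex by letting the last part of each piece be the complement of the union of the chosen classes --- this is what makes the position of your ``short slot'' irrelevant and avoids any factor for it. Your ``generic family'' as literally described (every consecutive quadruple split $2$--$2$ between two interval groups, ``with one vertex fixed in most of them'') neither covers the edges it needs to cover with the claimed count nor is it the typical configuration, and the Vandermonde bookkeeping you defer is unnecessary: the main count is simply $\binom{k}{d}g(n)^{\lfloor d/2\rfloor}n^{d-2\lfloor d/2\rfloor}\le\left(\frac{14}{15}\right)^{\lfloor d/2\rfloor}(1+o(1))\binom{kn}{d}$. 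As it stands, the proposal would need to be reorganised along these pattern-based lines to close the gap, at which point it becomes the paper's proof.
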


\begin{proof}
 In order to decompose the edge set of $K_{kn}^{(r)}$, we start by splitting the $kn$ vertices into $k$ equal parts, say $V\left(K_{kn}^{(r)}\right) = V_1\cup V_2\cup \cdots \cup V_k$, where $|V_i|= n$ for each $i$. 
 We consider the $r$-edges based on their intersection sizes with the $k$ vertex classes. 
 For each partition of $r$ into positive integers $r_1+r_2+\cdots+r_l$ with $r_1\le r_2\le \cdots \le r_l$ and for each collection of $l$ vertex classes $V_{i_1}, V_{i_2}, \ldots, V_{i_l}$, the set of $r$-edges $e$ with $|e\cap V_{i_j}| = r_j$ for all $j$ can be decomposed into $f_{r_1}(n)f_{r_2}(n)\cdots f_{r_l}(n)$ complete $r$-partite $r$-graphs: take a complete $r_j$-partite $r_j$-graph from a minimal decomposition of $K_n^{(r_j)}$ for each $j$, and form a complete $r$-partite $r$-graph by taking the product of them. 
 
 Note that if at least three values of the $r_j$ are odd, then $f_{r_1}(n)f_{r_2}(n)\cdots f_{r_l}(n) = O(n^{d-1})$, as $f_{s}(n)\le \binom{n}{\lfloor s/2\rfloor}$ for any $s$.
 So the set of $r$-edges $e$ with $|e\cap V_i|$ is odd for at least three distinct $V_i$ can be decomposed into $Cn^{d-1}$ complete $r$-partite $r$-graphs, for some constant $C$ depending on $d$ and $k$. 
 
 Let $C'$ be the number of partitions of $r$ into at most $d-1$ positive integers where exactly one of them is odd. Then we observe that the set of $r$-edges $e$ such that $e$ intersects with at most $d-1$ vertex classes and $|e\cap V_i|$ is odd for exactly one $V_i$ can be decomposed into at most $C'k^{d-1}n^{d}$ complete $r$-partite $r$-graphs. 
 
 We are now only left with two partitions of $r$: $r=1+2+2+\cdots+2$ and $r=2+2+\cdots+2+3$. 
 The first case corresponds to the set of $r$-edges with $r_1=1, r_2=\cdots=r_{d+1}=2$.  
 For each of the $\binom{k}{d}$ collections of $d$ vertex classes $V_{i_1},V_{i_2},\ldots,V_{i_d}$, we claim that the set of $r$-edges $\{e:|e\cap V_{i_j}|=2, j=1,2,\ldots,d\}$ can be decomposed into $g(n)^{ d/2 }$ or $ng(n)^{(d-1)/2}$ complete $r$-partite $r$-graphs, depending on whether $d$ is even or odd. 
 This is done by pairing up the $V_{i_j}$s (or all but one of the $V_{i_j}$s if $d$ is odd), and forming complete $r$-partite $r$-graphs using products of blocks in a minimal decomposition of $E(K_n)\times E(K_n)$. 
 [For example, for $d=4$, we would take a decomposition of $E(V_{i_1})\times E(V_{i_2})$ into blocks $E_x \times F_x, 1\le x\le g(n)$, and similarly a decomposition of $E(V_{i_3})\times E(V_{i_4})$ into blocks $G_x \times H_x, 1\le x\le g(n)$, and now the set of all $9$-edges $e$ with $|e\cap V_{i_j}| = 2$ for all $1\le j\le 4$ may be decomposed into $g(n)^2$ complete 9-partite 9-graphs by taking the $E_x\times F_x \times G_y\times H_y \times (V_{i_1}\cup V_{i_2}\cup V_{i_3}\cup V_{i_4})^c$ for $1\le x,y\le g(n)$.]
 
 Finally, the second case corresponds to the set of $r$-edges with $r_1=r_2=\cdots=r_{d-1} = 2, r_{d} = 3$. These can be decomposed in a similar fashion. Indeed, for each collection of $d$ vertex classes $V_{i_1},V_{i_2},\ldots,V_{i_d}$, the set of $r$-edges $\{e:|e\cap V_{i_d}|=3\mbox{ and } |e\cap V_{i_j}|=2, j=1,2,\ldots,d-1\}$ can be decomposed into $n^2g(n)^{(d-2)/2}$ or $ng(n)^{(d-1)/2}$ complete $r$-partite $r$-graphs, depending on whether $d$ is even or odd. There are $d\binom{k}{d}$ such sets of $r$-edges.
 
 Combining the above and the bound on $g(n)$, we have
 \begin{align*}
  f_r(kn) &\le 
  \begin{cases}
   \binom{k}{d}g(n)^{\frac{d}{2}} + d\binom{k}{d}n^2g(n)^{\frac{d-2}{2}}  +C'k^{d-1}n^{d}   +Cn^{d-1} &\quad\mbox{(if $d$ even)}\\
   \binom{k}{d}ng(n)^{\frac{d-1}{2}} + d\binom{k}{d}ng(n)^{\frac{d-1}{2}}  +C'k^{d-1}n^{d}   +Cn^{d-1} &\quad\mbox{(if $d$ odd)}
  \end{cases}\\
  &\le  \binom{k}{d}\left(\frac{14}{15}\right)^{\left\lfloor\frac{d}{2}\right\rfloor}n^d + d\binom{k}{d}\left(\frac{14}{15}\right)^{\left\lfloor\frac{d-1}{2}\right\rfloor}n^d + C'k^{d-1}n^d + o(n^d)\\
  &\le \left(\left(\frac{14}{15}\right)^{\left\lfloor\frac{d}{2}\right\rfloor} + d\left(\frac{14}{15}\right)^{\left\lfloor\frac{d-1}{2}\right\rfloor} + \frac{d!C'}{k}\right)\binom{k}{d}n^d + o(n^d)\\
  & \le \left(\left(\frac{14}{15}\right)^{\left\lfloor\frac{d}{2}\right\rfloor}+d\left(\frac{14}{15}\right)^{\left\lfloor\frac{d-1}{2}\right\rfloor}+\epsilon_k\right)(1+o(1))\binom{kn}{d}.
 \end{align*}
 \end{proof}
 
 \begin{corollary}
  Let $r \ge 295$ be a fixed odd number. Then there exists $c< 1$ such that $$f_r(n) \le c(1+o(1))\binom{n}{\lfloor r/2 \rfloor}.$$
 \end{corollary}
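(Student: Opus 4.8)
The plan is to derive the corollary directly from Theorem~\ref{theorem_main} by choosing $d$ and then $k$ appropriately. Write $r=2d+1$, so that $\lfloor r/2\rfloor = d$. The theorem gives, for each fixed $k$,
\[
f_r(kn) \le \left(\left(\frac{14}{15}\right)^{\lfloor d/2\rfloor} + d\left(\frac{14}{15}\right)^{\lfloor (d-1)/2\rfloor} + \epsilon_k\right)(1+o(1))\binom{kn}{d},
\]
with $\epsilon_k\to 0$. So the first task is purely numerical: find the smallest $d$ for which the ``main constant''
\[
A_d := \left(\frac{14}{15}\right)^{\lfloor d/2\rfloor} + d\left(\frac{14}{15}\right)^{\lfloor (d-1)/2\rfloor}
\]
is strictly less than $1$. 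Since $14/15<1$, the factors $(14/15)^{\lfloor d/2\rfloor}$ and $(14/15)^{\lfloor (d-1)/2\rfloor}$ decay geometrically in $d$, and they eventually beat the linear factor $d$; so $A_d\to 0$ and in particular $A_d<1$ for all large $d$. I would simply check that $d=147$ works (so $r=295$), i.e.\ verify $A_{147}<1$, a one-line computation with $(14/15)^{73}$ and $(14/15)^{74}$.

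Once such a $d$ is fixed with $A_d<1$, pick $\delta>0$ with $A_d+\delta<1$, and then choose $k$ large enough that $\epsilon_k<\delta$. Theorem~\ref{theorem_main} then yields
\[
f_r(kn)\le (A_d+\delta)(1+o(1))\binom{kn}{d}
\]
as $n\to\infty$. Setting $c = A_d+\delta < 1$, this says exactly that $f_r(m)\le c(1+o(1))\binom{m}{d}$ along the subsequence $m = kn$. The only remaining point is to pass from the subsequence of multiples of $k$ to all $m$; this is a standard monotonicity argument: $f_r$ is (up to lower-order terms) well-behaved under adding a bounded number of vertices, since an $r$-graph on $m$ vertices embeds in one on $\lceil m/k\rceil k \le m+k$ vertices, and $\binom{m+k}{d} = (1+o(1))\binom{m}{d}$ for fixed $k,d$. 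Hence $f_r(m)\le c(1+o(1))\binom{m}{d}$ for all $m$, with $d=\lfloor r/2\rfloor$, which is the claim.

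The statement of the corollary also asserts this for every fixed odd $r\ge 295$, not just $r=295$. For $r = 2d+1 > 295$ we have $d > 147$, and since $A_d$ is (eventually) decreasing and $A_{147}<1$, one checks $A_d<1$ for all $d\ge 147$; then the same argument applies verbatim with that $d$ and a suitable $k=k(r)$. Alternatively one can note directly that $A_d<1$ for all $d\ge 147$ from the geometric decay. In either case the constant $c$ may depend on $r$, which is all that is required.

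I expect no genuine obstacle here: the corollary is a routine consequence of the theorem. The only place needing a moment's care is confirming that $d=147$ is indeed small enough, i.e.\ that the threshold ``$A_d<1$'' first occurs at $d=147$ (so that $r=295$ is the value named in the introduction) --- this is just arithmetic with powers of $14/15$ --- and the boilerplate subsequence-to-all-$n$ step, which is standard and contributes only to the $o(1)$ term.
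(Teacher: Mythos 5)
Your proposal is correct and matches the paper's own proof in all essentials: fix $d=\lfloor r/2\rfloor$, verify numerically that $\left(\frac{14}{15}\right)^{\lfloor d/2\rfloor}+d\left(\frac{14}{15}\right)^{\lfloor (d-1)/2\rfloor}<1$ for $d\ge 147$, choose $k$ large enough that adding $\epsilon_k$ keeps the constant below $1$, and pass from multiples of $k$ to all $n$ by monotonicity of $f_r$ together with $\binom{n+k}{d}=(1+o(1))\binom{n}{d}$. The only cosmetic difference is your aside about $d=147$ being the first threshold and $A_d$ being eventually decreasing, neither of which is needed (one only needs the bound for all $d\ge 147$, which follows from the geometric decay, exactly as the paper asserts).
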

 \begin{proof}
  As above, write $r=2d+1$. It is straightforward to check that for $d\ge 147$ we have $\left(\frac{14}{15}\right)^{\left\lfloor\frac{d}{2}\right\rfloor}+d\left(\frac{14}{15}\right)^{\left\lfloor\frac{d-1}{2}\right\rfloor} <1$. 
  Choosing $k$ such that $$c = \left(\frac{14}{15}\right)^{\left\lfloor\frac{d}{2}\right\rfloor}+d\left(\frac{14}{15}\right)^{\left\lfloor\frac{d-1}{2}\right\rfloor} + \epsilon_k < 1,$$
  we have $f_r(kn) \le c(1+o(1))\binom{kn}{d}$ for all $n$. 
  However since the function $f_r(n)$ is monotone in $n$, and $k$ is constant as $n$ varies, it follows that $f_r(n) \le c(1+o(1))\binom{n}{d}$ for all $n$.
 \end{proof} 

 From Theorem~\ref{theorem_main}, we have $$c_{2d+1} \le \left(\frac{14}{15}\right)^{\left\lfloor\frac{d}{2}\right\rfloor}+d\left(\frac{14}{15}\right)^{\left\lfloor\frac{d-1}{2}\right\rfloor}$$ for every $d$. 
 Also, it is easy to see that $c_{2d}\le c_{2d+1}$. Indeed, by excluding a vertex in the complete $(2d+1)$-graph on $n+1$ vertices, the complete $(2d)$-partite $(2d)$-graphs induced from the complete $(2d+1)$-partite $(2d+1)$-graphs in a minimal decomposition of $K_{n+1}^{(2d+1)}$ form a decomposition of $K_{n}^{(2d)}$, implying that $f_{2d}(n)\le f_{2d+1}(n+1)$. Hence we have the following.
 
 \begin{corollary}\label{cor_cr}
 The numbers $c_r$ satisfy 
  $$ c_r \le \frac{r}{2}\left(\frac{14}{15}\right)^{r/4}+o(1).$$
 \end{corollary}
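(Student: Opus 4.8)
The plan is to deduce Corollary~\ref{cor_cr} from the two facts already assembled just above it: the bound $c_{2d+1} \le \left(\frac{14}{15}\right)^{\lfloor d/2\rfloor} + d\left(\frac{14}{15}\right)^{\lfloor (d-1)/2\rfloor}$ coming from Theorem~\ref{theorem_main} (let $k\to\infty$ so $\epsilon_k\to 0$), and the monotonicity relation $c_{2d}\le c_{2d+1}$. So it suffices to show that the right-hand side above is at most $\frac{r}{2}\left(\frac{14}{15}\right)^{r/4} + o(1)$, uniformly covering both parities of $r$ via $d = \lfloor r/2\rfloor$.

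First I would handle the exponents. Since $\lfloor d/2\rfloor \ge (d-1)/2$ and $\lfloor (d-1)/2\rfloor \ge (d-1)/2$, and $14/15 < 1$, both terms $\left(\frac{14}{15}\right)^{\lfloor d/2\rfloor}$ and $d\left(\frac{14}{15}\right)^{\lfloor (d-1)/2\rfloor}$ are at most (a constant times) $d\left(\frac{14}{15}\right)^{(d-1)/2}$. So $c_{2d+1} \le (d+1)\left(\frac{14}{15}\right)^{(d-1)/2}$, say, and then also $c_{2d}\le c_{2d+1}\le (d+1)\left(\frac{14}{15}\right)^{(d-1)/2}$. Now I would convert back to $r$: for $r=2d+1$ we have $d = (r-1)/2$, so $(d-1)/2 = (r-3)/4 = r/4 - 3/4$, giving $\left(\frac{14}{15}\right)^{(d-1)/2} = \left(\frac{15}{14}\right)^{3/4}\left(\frac{14}{15}\right)^{r/4}$; similarly $d+1 = (r+1)/2 \le \frac{r}{2}\cdot\frac{r+1}{r}$. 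Collecting the bounded constant factors, $c_r \le \frac{r}{2}\left(\frac{14}{15}\right)^{r/4}\cdot(1 + o(1))$. The even case $r = 2d$ gives $d = r/2$, $(d-1)/2 = r/4 - 1/2$, an even smaller exponent shift, so the same bound holds (with room to spare). Folding the $(1+o(1))$ multiplicative slack into an additive $o(1)$ is legitimate because $\frac{r}{2}\left(\frac{14}{15}\right)^{r/4}\to 0$, so a multiplicative $(1+o(1))$ on a quantity tending to $0$ is itself $o(1)$ — that is the only slightly delicate bookkeeping point, and it is what the statement's "$+o(1)$" is absorbing.

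I do not anticipate a real obstacle here: the corollary is essentially a repackaging of Theorem~\ref{theorem_main}'s bound into a cleaner, parity-uniform asymptotic form, and every inequality used (monotonicity of $(14/15)^x$, $\lfloor d/2\rfloor \ge (d-1)/2$, $f_{2d}(n)\le f_{2d+1}(n+1)$) is elementary. The only place to be careful is to state the claim for all $r$ (not just large or just odd $r$) and to make sure the constant factors lost in the exponent-rounding and in replacing $d+1$ by $\frac r2$ are genuinely bounded, so that they can be swept into the $o(1)$; writing the intermediate bound as $c_r \le \frac r2\left(\frac{14}{15}\right)^{r/4}(1+o(1))$ and then noting the left-hand target already contains an additive $o(1)$ closes this cleanly. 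This also transparently re-proves $c_r\to 0$, recovering the second claim advertised in the abstract and introduction.
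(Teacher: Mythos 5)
Your proposal is correct and follows the same route as the paper, which likewise deduces the corollary directly from the displayed bound $c_{2d+1}\le\left(\frac{14}{15}\right)^{\lfloor d/2\rfloor}+d\left(\frac{14}{15}\right)^{\lfloor (d-1)/2\rfloor}$ together with $c_{2d}\le c_{2d+1}$, absorbing the floor-rounding and constant factors into the additive $o(1)$ via exactly the observation you single out, namely that $\frac{r}{2}\left(\frac{14}{15}\right)^{r/4}\to 0$. One small slip worth fixing: $\lfloor (d-1)/2\rfloor\ge(d-1)/2$ is false for even $d$ (floors round down); the correct bound is $\lfloor (d-1)/2\rfloor\ge(d-2)/2$, which costs only a bounded factor such as $\left(\frac{15}{14}\right)^{1/2}$ and is therefore harmless, since your final absorption step swallows any bounded constant multiple of a quantity tending to $0$.
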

 
 Corollary~\ref{cor_cr} implies that $c_r\rightarrow 0$ as $r\rightarrow \infty$, proving Conjecture 16 in \cite{leader}.

\end{document}